\newtheorem{theo}{Theorem}
\def\Z{\mathbb Z}
\def\N{\mathbb N}
\def\l{\left}
\def\r{\right}
\def\bg{\bigg}
\def\({\bg(}
\def\){\bg)}
\def\f{\frac}
\def\poq#1#2{(#1;q)_#2}
\def\bi{\binom}
\theoremstyle{plain}
\newtheorem{lemma}{Lemma}
\theoremstyle{definition}
\theoremstyle{remark}
\newtheorem{remark}{Remark}
\makeatletter \@addtoreset{equation}{section}
\begin{document}
\baselineskip=17pt
\medskip

\hbox{Proc. Amer. Math. Soc. 147(2019), no.\,5, 1953--1961.}
\medskip

\title
[On $q$-analogues of some series for $\pi$ and $\pi^2$]
{On $q$-analogues of some series for $\pi$ and $\pi^2$}

\author
[Qing-Hu Hou, Christian Krattenthaler and Zhi-Wei Sun]
{Qing-Hu Hou$^\dagger$, Christian Krattenthaler$^\ddagger$ and Zhi-Wei Sun$^\star$}

 \address {(Qing-Hu Hou) School of Mathematics, Tianjin University,
  Tianjin 300350, People's Republic of China}

\email{{\tt qh\_hou@tju.edu.cn}
\newline\indent
{\it Homepage}: {\tt http://cam.tju.edu.cn/\lower0.5ex\hbox{\~{}}hou}}

\address {(Christian Krattenthaler) Fakult\"at f\"ur Mathematik, Universit\"at Wien,
Oskar-Morgenstern-Platz~1, A-1090 Vienna, Austria}

\email{{\tt christian.krattenthaler@univie.ac.at}
\newline\indent
{\it Homepage}: {\tt http://www.mat.univie.ac.at/\lower0.5ex\hbox{\~{}}kratt}}

\address {(Zhi-Wei Sun) Department of Mathematics, Nanjing
University, Nanjing 210093, People's Republic of China}

\email{{\tt zwsun@nju.edu.cn}
\newline\indent
{\it Homepage}: {\tt http://maths.nju.edu.cn/\lower0.5ex\hbox{\~{}}zwsun}}

\keywords{$q$-series, identity, WZ method, series for $\pi$ and $\pi^2$}
\subjclass[2010]{Primary 05A30, 33D15; Secondary 11B65, 33F10}

\thanks{$^\dagger$ Supported by the
  National Natural Science Foundation of China (grant 11771330).\newline
\indent
$^\ddagger$ Partially supported by the Austrian
Science Foundation FWF (grant S50-N15)
in the framework of the Special Research Program
``Algorithmic and Enumerative Combinatorics".
\newline
\indent
$^\star$ Corresponding author. Supported by the
  National Natural Science Foundation of China (grant 11571162).}

\begin{abstract} We obtain a new $q$-analogue of the classical Leibniz series $\sum_{k=0}^\infty(-1)^k/(2k+1)=\pi/4$, namely
\begin{equation*}
\sum_{k=0}^\infty\frac{(-1)^kq^{k(k+3)/2}}{1-q^{2k+1}}=\frac{(q^2;q^2)_{\infty}(q^8;q^8)_{\infty}}{(q;q^2)_{\infty}(q^4;q^8)_{\infty}},
\end{equation*}
where $q$ is a complex number with $|q|<1$.
We also show that the Zeilberger-type series
$\sum_{k=1}^\infty(3k-1)16^k/(k\binom{2k}k)^3=\pi^2/2$
has two $q$-analogues with $|q|<1$, one of which is
$$\sum_{n=0}^\infty q^{n(n+1)/2} \frac {1-q^{3n+2}} {1-q}
\cdot\frac{(q;q)_n^3 (-q;q)_n}{(q^3;q^2)_{n}^3} = (1-q)^2
\frac{(q^2;q^2)^4_\infty}{(q;q^2)^4_\infty}.$$
\end{abstract}

\maketitle

\section{Introduction}
Let $q$ be a complex number with $|q|<1$.
As usual, for $n\in\mathbb N=\{0,1,2,\ldots\}$ and
a complex number $a$, we define the $q$-shifted factorial by
$$(a;q)_n=\prod_{k=0}^{n-1}(1-aq^k).$$
(An empty product is considered to take the value $1$, and thus $(a;q)_0=1$.)
We also adopt the standard notion
$$(a;q)_\infty = \lim_{n\to\infty}(a;q)_n=\prod_{k=0}^{\infty} (1-a q^k).$$
By the definition of the $q$-Gamma function \cite[p.~20]{Gas04}, we have
\[
\frac{(q^2;q^2)_\infty}{(q;q^2)_\infty} = \Gamma_{q^2}\left( \frac{1}{2} \right) (1-q^2)^{-1/2}.
\]
Therefore,
\begin{equation}\label{pi}
\lim_{q \to 1} (1-q) \frac{(q^2;q^2)^2_\infty}{(q;q^2)^2_\infty} = \Gamma \left( \frac{1}{2} \right)^2 \lim_{q \to 1} \frac{1-q}{1-q^2} = \frac{\pi}{2}
\end{equation}
and
\[
\lim_{q \to 1} (1-q^2) \frac{(q^4;q^4)^2_\infty}{(q^2;q^4)^2_\infty} = \frac{\pi}{2}.
\]
In view of this, Ramanujan's formula
\begin{equation}\label{pi1}
\sum_{k=0}^\infty\f{(-q)^k}{1-q^{2k+1}}=\f{(q^4;q^4)_{\infty}^2}{(q^2;q^4)_{\infty}^2}\ \ (|q|<1)
\end{equation}
(equivalent to Example (iv) in \cite[p.\,139]{B91}) can be viewed as a $q$-analogue of
Leibniz's identity
\begin{equation}\label{Leib}\sum_{k=0}^\infty\f{(-1)^k}{2k+1}=\f{\pi}4.
\end{equation}

Guo and Liu \cite{GL} used the WZ method to deduce the identities
\begin{equation} \label{eq:GL1}
\sum_{n=0}^\infty q^{n^2} \frac {1-q^{6n+1}} {1-q} \cdot\frac{(q;q^2)_n^2
  (q^2;q^4)_n}{(q^4;q^4)_n^3} = \frac{(1+q) (q^2;q^4)_\infty
  (q^6;q^4)_\infty}{(q^4;q^4)_\infty^2}
\end{equation}
and
\begin{equation} \label{eq:GL2}
\sum_{n=0}^\infty (-1)^nq^{3n^2} \frac {1-q^{6n+1}} {1-q}
\cdot\frac{(q;q^2)_n^3}{(q^4;q^4)_n^3} = \frac{(q^3;q^4)_\infty
  (q^5;q^4)_\infty}{(q^4;q^4)_\infty^2}
\end{equation}
with $|q|<1$, which are $q$-analogues of Ramanujan's formulas \cite[p.~352]{Ber94}
\[
\sum_{n=0}^\infty (6n+1)\frac{(1/2)_n^3}{n!^3 4^n} =\sum_{n=0}^\infty(6n+1)\f{\bi{2n}n^3}{256^n}= \frac{4}{\pi}
\]
and
\[\sum_{n=0}^\infty (6n+1)(-1)^n\frac{(1/2)_n^3}{n!^3 8^n} =\sum_{n=0}^\infty(6n+1)\f{\bi{2n}n^3}{(-512)^n}= \frac{2\sqrt2}{\pi},
\]
where $(a)_n = \prod_{k=0}^{n-1}(a+k)$ is the Pochhammer symbol. Note that
$$\f{\l(1/2\r)_n}{n!}=(-1)^n\bi{-1/2}n=\f{\bi{2n}n}{4^n}\quad\mbox{for all}\ n\in\N.$$

Quite recently, Sun \cite{Sun18} provided $q$-analogues of Euler's
classical formulas $\zeta(2)=\pi^2/6$ and $\zeta(4)=\pi^4/90$.

In 1993, Zeilberger \cite{Zeil93} used the WZ method to show that
\[
\sum_{n=0}^\infty (21n+13) \frac{n!^6}{8 (2n+1)!^3} =\sum_{k=1}^\infty\f{21k-8}{k^3\bi{2k}k^3}= \frac{\pi^2}{6}.
\]
A complicated $q$-analogue of the identity $\sum_{k=1}^\infty(21k-8)/(k^3\binom{2k}k)^3=\zeta(2)$ was given by 
Hessami Pilehrood and Hessami Pilehrood \cite{HP} in 2011.
Following Zeilberger's work, in 2008 Guillera \cite[Identity~1]{Gui08} employed the WZ method to obtain the Zeilberger-type series
\begin{equation}\label{o-id}
\sum_{k=1}^\infty  \frac{(3k-1)16^k}{k^3 {\binom {2k} k}^3} = \frac{\pi^2}{2}.
\end{equation}

In this paper we study $q$-analogues of the identities \eqref{Leib}
and \eqref{o-id}.

Now we state our main results.

\begin{theo}\label{Th1.1} For $|q|<1$ we have
\begin{equation}\label{pi2}
\sum_{k=0}^\infty\f{(-1)^kq^{k(k+3)/2}}{1-q^{2k+1}}=\f{(q^2;q^2)_{\infty}(q^8;q^8)_{\infty}}{(q;q^2)_{\infty}(q^4;q^8)_{\infty}}.
\end{equation}
\end{theo}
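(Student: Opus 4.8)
The plan is to recognise the right-hand side of \eqref{pi2} as a product of theta functions, and to evaluate the left-hand side by converting it into a bilateral Lambert-type series which can be summed in closed form. Since both sides of \eqref{pi2} are holomorphic in $q$ on the unit disc, it suffices to prove the identity for $q\in(0,1)$ and then invoke the identity theorem; accordingly $\sqrt q$ always denotes the positive square root. The first move is to symmetrise the series: putting $a_m=(-1)^m q^{m(m+3)/2}/(1-q^{2m+1})$ for $m\in\mathbb Z$, a short computation (replace $2m+1$ by $-(2m+1)$ in the denominator) shows $a_{-1-m}=a_m$, so, the bilateral series converging absolutely,
$$\sum_{k=0}^\infty a_k=\frac12\sum_{m=-\infty}^\infty a_m.$$

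Next I would use $q^{m(m+3)/2}=q^m q^{m(m+1)/2}$ together with the partial-fraction identity $\frac{q^m}{1-q^{2m+1}}=\frac1{2\sqrt q}\big(\frac1{1-\sqrt q\,q^m}-\frac1{1+\sqrt q\,q^m}\big)$; this rewrites $a_m=\frac1{2\sqrt q}(-1)^m q^{m(m+1)/2}\big(\frac1{1-\sqrt q\,q^m}-\frac1{1+\sqrt q\,q^m}\big)$, and summing over $m\in\mathbb Z$ gives
$$\sum_{k=0}^\infty a_k=\frac1{4\sqrt q}\big(G(\sqrt q)-G(-\sqrt q)\big),\qquad G(x):=\sum_{m=-\infty}^\infty\frac{(-1)^m q^{m(m+1)/2}}{1-xq^m}.$$

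The decisive input is the classical partial-fraction expansion of the reciprocal Jacobi theta function,
$$G(x)=\frac{(q;q)_\infty^2}{(x;q)_\infty(q/x;q)_\infty}.$$
A self-contained proof: both sides are meromorphic on $\mathbb C\setminus\{0\}$, with the same simple poles, at $x\in q^{\mathbb Z}$, and the same residues there; and both satisfy the functional equation $f(qx)=-x f(x)$ (for $G$, shift the summation index by one and use $\sum_{m\in\mathbb Z}(-1)^m q^{m(m-1)/2}=0$). Their difference is then holomorphic on $\mathbb C\setminus\{0\}$ and obeys $h(qx)=-x h(x)$; inspecting its Laurent coefficients forces $h\equiv0$. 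Since $q/(\pm\sqrt q)=\pm\sqrt q$, this gives $G(\pm\sqrt q)=(q;q)_\infty^2/(\pm\sqrt q;q)_\infty^2$, hence
$$\sum_{k=0}^\infty a_k=\frac{(q;q)_\infty^2}{4\sqrt q}\left(\frac1{(\sqrt q;q)_\infty^2}-\frac1{(-\sqrt q;q)_\infty^2}\right).$$

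It remains to evaluate this via Jacobi's triple product. With $\varphi(z)=\sum_{n\in\mathbb Z}z^{n^2}$ and $\psi(z)=\sum_{n\ge0}z^{n(n+1)/2}=(z^2;z^2)_\infty/(z;z^2)_\infty$, the triple product gives $(\pm\sqrt q;q)_\infty^2(q;q)_\infty=\varphi(\mp\sqrt q)$ and $(\sqrt q;q)_\infty(-\sqrt q;q)_\infty=(q;q^2)_\infty$, so the expression in parentheses above equals $\big(\varphi(\sqrt q)-\varphi(-\sqrt q)\big)\big/\big((q;q)_\infty(q;q^2)_\infty^2\big)$; and $\varphi(\sqrt q)-\varphi(-\sqrt q)=2\sum_{n\ \mathrm{odd}}q^{n^2/2}=4\sqrt q\sum_{k\ge0}q^{2k(k+1)}=4\sqrt q\,\psi(q^4)$. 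Combining these and using $(q;q)_\infty=(q;q^2)_\infty(q^2;q^2)_\infty$ and $\psi(q^4)=(q^8;q^8)_\infty/(q^4;q^8)_\infty$,
$$\sum_{k=0}^\infty a_k=\frac{(q;q)_\infty\,\psi(q^4)}{(q;q^2)_\infty^2}=\frac{(q^2;q^2)_\infty(q^8;q^8)_\infty}{(q;q^2)_\infty(q^4;q^8)_\infty},$$
which is \eqref{pi2}. The main obstacle is the middle step, the closed evaluation of the bilateral series $G(x)$; the rest is routine symmetrisation and standard manipulation of theta products. (The same scheme, with Ramanujan's ${}_1\psi_1$-summation replacing the formula for $G$, likewise reproves \eqref{pi1}; a $q$-WZ proof in the spirit of Guo and Liu may also be possible, but this route seems the most transparent.)
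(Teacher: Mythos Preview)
Your proof is correct and takes a genuinely different route from both proofs in the paper. The paper's first proof is combinatorial: it identifies the right-hand side as $\psi(q)\psi(q^4)$ via Gau{\ss}' formula, then matches the coefficient of $q^n$ on each side using Jacobi's two-squares theorem (equivalently, the divisor formula for $r_2(8n+5)$). The paper's second proof is a specialisation of a ${}_{10}\phi_9$ transformation formula from Gasper--Rahman, followed by a ${}_2\phi_2$ summation. Your argument instead symmetrises the one-sided Lambert series into a bilateral one, applies the classical partial-fraction expansion of $1/\theta$ (your formula for $G(x)$), and finishes with standard theta-product manipulations. What this buys: your proof is analytically self-contained --- the only nontrivial input, the evaluation of $G(x)$, you prove from scratch via the functional equation and residue matching --- whereas the paper's first proof imports an arithmetic result and its second proof imports heavy $q$-hypergeometric machinery. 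On the other hand, the paper's first proof gives an explicit arithmetic meaning to the coefficients (they count representations $n=T_x+4T_y$), which your approach does not make visible. All three proofs ultimately land on the product $\psi(q)\psi(q^4)$, but by quite different mechanisms.
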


The above identity gives a new $q$-analogue of Leibniz's identity \eqref{Leib}
since
\[
\lim_{q \to 1} (1-q^2) \f{(q^2;q^2)_{\infty}(q^8;q^8)_{\infty}}{(q;q^2)_{\infty}(q^4;q^8)_{\infty}}
=\lim_{q\to1}\Gamma_{q^2}\l(\f12\r)\Gamma_{q^8}\l(\f12\r)=\Gamma\l(\f12\r)^2=\pi.
\]

\begin{theo} \label{th-main}
For $|q|<1$ we have
\begin{equation}\label{q2}
 \sum_{n=0}^\infty q^{2n(n+1)} (1 + q^{2n+2} - 2q^{4n+3}) \frac{(q^2;q^2)_{n}^3}{(q;q^2)_{n+1}^3 (-1;q)_{2n+3}} = \f12 \sum_{n=0}^\infty \frac{q^{2n}}{(1-q^{2n+1})^2}
\end{equation}
and
\begin{equation}\label{q-id}
\sum_{n=0}^\infty q^{n(n+1)/2} \frac {1-q^{3n+2}} {1-q}
\cdot\frac{(q;q)_n^3 (-q;q)_n}{(q^3;q^2)_{n}^3} = (1-q)^2
\frac{(q^2;q^2)^4_\infty}{(q;q^2)^4_\infty}.
\end{equation}
\end{theo}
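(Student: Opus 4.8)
\medskip
\noindent\emph{Proof proposal.}
Since \eqref{o-id} itself was found by creative telescoping, and Guo and Liu \cite{GL} obtained their $q$-analogues \eqref{eq:GL1}--\eqref{eq:GL2} by the $q$-WZ method, the natural plan is to prove \eqref{q2} and \eqref{q-id} the same way. For each identity one searches for a $q$-WZ pair $\l(F(n,k),G(n,k)\r)$, with $G(n,k)=R(n,k)\,F(n,k)$ for an explicit rational function $R$ in $q^{n}$ and $q^{k}$, normalized so that the left-hand summand of the target is a fixed scalar multiple of $F(n,0)$ and satisfying
\[
F(n+1,k)-F(n,k)=G(n,k+1)-G(n,k)\qquad(n,k\ge0).
\]
Once the pair has been guessed, checking this relation is routine: after division by $F(n,k)$ it turns into a rational-function identity in $q^{n},q^{k}$ that one clears of denominators and verifies directly (equivalently, $R$ is the certificate produced by a $q$-analogue of the creative-telescoping algorithm). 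Summing the relation over $k\ge0$, the tail $G(n,k)\to0$ as $k\to\infty$ since $|q|<1$, so the right side telescopes and yields $f(n+1)-f(n)=-G(n,0)$ for $f(n):=\sum_{k\ge0}F(n,k)$; iterating from $n=0$ and letting $n\to\infty$ then expresses the double sum $\sum_{n,k\ge0}F(n,k)$ through $\sum_{n\ge0}G(n,0)$ together with $\lim_{n\to\infty}f(n)$.

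For \eqref{q-id} I expect the scheme, suitably normalized, to show that the left-hand side equals $(1-q)^{2}\sum_{n\ge0}(2n+1)q^{n}/(1-q^{2n+1})$; the infinite product then follows from the classical Lambert-series expansion
\[
\f{(q^{2};q^{2})_{\infty}^{4}}{(q;q^{2})_{\infty}^{4}}=\sum_{n=0}^{\infty}\f{(2n+1)q^{n}}{1-q^{2n+1}},
\]
which is a standard consequence of Jacobi's triple product (alternatively, one can arrange the telescoping so that, as $n\to\infty$, the relevant ratio of $q$-shifted factorials converges directly to $(1-q)^{2}(q^{2};q^{2})_{\infty}^{4}/(q;q^{2})_{\infty}^{4}$, using $(q^{2};q^{2})_{n}\to(q^{2};q^{2})_{\infty}$ and $(q;q^{2})_{n+1}\to(q;q^{2})_{\infty}$). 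For \eqref{q2} an analogous $q$-WZ pair is used, but the way it telescopes makes the answer the ``base'' series $\f12\sum_{n\ge0}q^{2n}/(1-q^{2n+1})^{2}$ rather than a product; there the awkward factor $1+q^{2n+2}-2q^{4n+3}=(1-q^{4n+3})+q^{2n+2}(1-q^{2n+1})$ is just the combination that shows up when two neighbouring values of the companion $G$ are collected, and the right-hand side is matched to the WZ output via the elementary expansion $q^{2n}/(1-q^{2n+1})^{2}=\sum_{m\ge1}m\,q^{(2n+1)m-1}$ and an interchange of the two sums.

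The genuine obstacle is the very first step: finding the correct $q$-deformation of the WZ pair underlying \eqref{o-id}. That pair does not $q$-lift uniquely, and the two $q$-analogues here plainly come from different normalizations rather than from a substitution $q\mapsto q^{c}$ --- witness the exponents $q^{n(n+1)/2}$ versus $q^{2n(n+1)}$ and the asymmetric factor $(-1;q)_{2n+3}=2(-q;q)_{2n+2}$ in \eqref{q2} --- so isolating the lifts whose closures are as clean as \eqref{q2} and \eqref{q-id} takes some (computer-assisted) experimentation. The rest is technical rather than conceptual: the $q$-WZ certificate verification is lengthy but mechanical, and one must still justify the interchanges of the double sums and the vanishing of the boundary terms as $n,k\to\infty$ for every $q$ with $|q|<1$, which follows from the super-exponential decay $q^{\,n(n+1)/2}$ (respectively $q^{\,2n(n+1)}$) of the summands together with the boundedness of the surrounding $q$-shifted-factorial ratios.
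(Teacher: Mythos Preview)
Your plan for \eqref{q2} is on the right track---the paper does prove it via a $q$-WZ pair lifting Guillera's---but you have not identified the decisive twist. The paper's pair $(F_q,G_q)$ is designed so that the left-hand summand of \eqref{q2} is $G_q(n+\tfrac12,0)$, not $G_q(n,0)$: one extends $(x;q)_n:=(x;q)_\infty/(xq^n;q)_\infty$ to real indices, checks that $(F_q(n+a,k),G_q(n+a,k))$ is a WZ pair for real $a>0$, and then at $a=\tfrac12$ the Amdeberhan--Zeilberger relation
\[
\sum_{n\ge0}G_q\!\Big(n+\tfrac12,0\Big)=\sum_{k\ge0}F_q\!\Big(\tfrac12,k\Big)
\]
applies, with $F_q(\tfrac12,k)$ collapsing (via $(q;q^2)_{n+1/2}=(q;q^2)_\infty(q^2;q^2)_n/(q^2;q^2)_\infty$, etc.) to a constant times $q^{2k}/(1-q^{2k+1})^2$. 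The half-integer shift is the whole point; without it the $k$-sum does not land on the clean right-hand side, so this is the missing idea rather than a ``normalization''.

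For \eqref{q-id} your plan genuinely diverges from the paper, which drops WZ altogether. The paper specializes Gasper and Rahman's quadratic transformation \cite[Eq.~(3.8.13)]{Gas04} at $a=q^2$, $b=c=q$; the left-hand side of that transformation then matches the left-hand side of \eqref{q-id} as $d\to0$, and the ${}_3\phi_2$ on the right is shown to tend to~$1$ by a further nonterminating transformation \cite[(III.34)]{Gas04}, giving the infinite product at once. This is short, closed-form, and non-computational. Your proposed route---telescope to the Lambert series $\sum_{n\ge0}(2n+1)q^{n}/(1-q^{2n+1})$ and then invoke $\psi(q)^4$---is not absurd in principle, but you have not exhibited a pair that actually telescopes to that target, and there is no reason to expect one is easy to guess; as written, this half of the proposal is a hope rather than an argument.
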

Multiplying both sides of \eqref{q2} by $(1-q)^2$ and then letting $q \to 1$, we obtain
\[
\frac{1}{4} \sum_{n=0}^\infty (3n+2) \frac{2^{4n} n!^6}{(2n+1)!^3} = \f12\sum_{n=0}^\infty \frac{1}{(2n+1)^2} =
\f12\l(1-\f14\r)\zeta(2)= \frac{\pi^2}{16},
\]
which is equivalent to \eqref{o-id}.
In view of \eqref{pi}
and the fact that
\[
\lim_{q \to 1} q^{n(n+1)/2} \frac {1-q^{3n+2}} {1-q}
\cdot\frac{(q;q)_n^3 (-q;q)_n}{(q^3;q^2)_{n}^3} =  \frac{(3n+2)16^{n+1}}{2
  (n+1)^3 {\binom {2n+2} {n+1}}^3},
\]
the identity \eqref{q-id} is also a $q$-analogue of \eqref{o-id}. The expansions of both sides of \eqref{q-id} are
$$1+2q-q^2+3q^4-6q^5+3q^6+8q^7-16q^8+8q^9+10q^{10}+\cdots.$$

In \cite[Conjecture~1.4]{Sun11}, Sun presented several conjectural identities similar to Zeilberger-type series
(one of which is $\sum_{k=1}^\infty(10k-3)8^k/(k^3\bi{2k}k^2\bi{3k}k)=\pi^2/2$), but we could not find $q$-analogues of them.

We are going to show Theorems \ref{Th1.1} and \ref{th-main} in
Sections~\ref{sec:2} and~\ref{sec:3} respectively. Finally, in
Section~\ref{sec:4}, we give alternative proofs for \eqref{eq:GL1}
and \eqref{eq:GL2}.

\section{Proof of Theorem~\ref{Th1.1}}
\label{sec:2}

As usual, for $x\in\Z$ we let $T_x$ denote the triangular number $x(x+1)/2$.

\begin{lemma}\label{Lem2.1} Let $n\in\N$, and define
$$t_2(n):=|\{(x,y)\in\N^2:\ T_x+4T_y=n\}|.$$
Then
\begin{equation}\label{t2'}t_2(n)=
\underset{d<\sqrt{8n+5}}{\sum_{d\mid 8n+5}}(-1)^{(d-1)/2}.
\end{equation}
\end{lemma}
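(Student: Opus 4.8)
The plan is to translate the counting problem into one about sums of two squares and then to invoke the classical two-squares formula. Recall the elementary identity $8T_x+1=(2x+1)^2$, valid for every integer $x$. Multiplying the relation $T_x+4T_y=n$ by $8$ and adding $5$, one sees that a pair $(x,y)\in\N^2$ satisfies $T_x+4T_y=n$ precisely when $(2x+1)^2+(4y+2)^2=8n+5$. Thus $t_2(n)$ equals the number of solutions of $a^2+c^2=8n+5$ in which $a$ is a positive odd integer and $c$ is a positive integer with $c\equiv2\pmod4$.

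The first step is to notice that these positivity and congruence constraints are essentially automatic. If $(a,c)\in\Z^2$ satisfies $a^2+c^2=8n+5$ with $c$ even, then $a^2\equiv8n+5\equiv1\pmod4$ forces $a$ odd, hence $a^2\equiv1\pmod8$, hence $c^2\equiv4\pmod8$, so that $c\equiv2\pmod4$; moreover $a\neq0$ and $c\neq0$ because $8n+5\equiv5\pmod8$ is never a perfect square. Consequently the four sign choices $(\pm a,\pm c)$ are distinct, and the datum of a pair $(x,y)\in\N^2$ with $T_x+4T_y=n$ is equivalent to that of four integer solutions of $a^2+c^2=8n+5$ with $c$ even; that is,
\[
4\,t_2(n)=\bigl|\{(a,c)\in\Z^2:\ a^2+c^2=8n+5,\ c\ \text{even}\}\bigr|.
\]
Since $8n+5$ is odd, in every representation $a^2+c^2=8n+5$ exactly one of $a,c$ is even, and the involution $(a,c)\mapsto(c,a)$ shows that precisely half of the $r_2(8n+5)$ representations have $c$ even, where $r_2(m):=|\{(a,c)\in\Z^2:\ a^2+c^2=m\}|$. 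Hence $8\,t_2(n)=r_2(8n+5)$, and the classical two-squares formula $r_2(m)=4\sum_{d\mid m}(-1)^{(d-1)/2}$ (the sum being over the necessarily odd divisors of the odd number $8n+5$) yields
\[
t_2(n)=\frac12\sum_{d\mid 8n+5}(-1)^{(d-1)/2}.
\]

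It remains to fold this divisor sum about $\sqrt{8n+5}$. Pairing a divisor $d$ of $m=8n+5$ with $m/d$, the product $d\cdot(m/d)=m\equiv1\pmod4$ forces $d\equiv m/d\pmod4$, so $(-1)^{(d-1)/2}=(-1)^{((m/d)-1)/2}$; and since $m$ is not a perfect square, the map $d\mapsto m/d$ has no fixed point. Hence $\sum_{d\mid m}(-1)^{(d-1)/2}$ equals twice its restriction to the divisors $d<\sqrt{m}$, and substituting this into the last display gives \eqref{t2'}.

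The routine but slightly delicate part is the bookkeeping in the second paragraph: checking that the passage from pairs in $\N^2$ to integer solutions of $a^2+c^2=8n+5$ with $c$ even is genuinely $4$-to-$1$, and that going from $r_2(8n+5)$ to the ``$c$ even'' count costs exactly a factor $2$; everything else is a direct application of standard facts. (Alternatively, one could bypass the two-squares formula and count the ideals of norm $8n+5$ in $\Z[i]$, arriving at the same character sum.)
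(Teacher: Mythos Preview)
Your proof is correct and follows essentially the same route as the paper's own argument: convert $T_x+4T_y=n$ into $(2x+1)^2+(2(2y+1))^2=8n+5$, deduce $t_2(n)=\tfrac18 r_2(8n+5)$, apply the classical two-squares formula, and fold the divisor sum using $8n+5\equiv1\pmod4$ and the fact that $8n+5$ is not a perfect square. Your write-up is somewhat more explicit about the $4$-to-$1$ and $2$-to-$1$ bookkeeping than the paper's, but the approach is identical.
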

\begin{proof}
By Theorem 3.2.1 of \cite[p.~56]{B06}, for any positive integer $m$ we have
 \begin{equation*}r_2(m)=4\sum_{2\nmid d\mid m}(-1)^{(d-1)/2},
\end{equation*}
where $r_2(m):=|\{(x,y)\in\Z^2:\ x^2+y^2=m\}|$. Observe that
\begin{align*}t_2(n)=&|\{(x,y)\in\N^2:\ (2x+1)^2+4(2y+1)^2=8n+5\}|
\\=&\f14|\{(x,y)\in\Z^2:\ x^2+(2y)^2=8n+5\}|=\f18r_2(8n+5)
\\=&\f12\sum_{d\mid 8n+5}(-1)^{(d-1)/2}=
\underset{d<\sqrt{8n+5}}{\sum_{d\mid 8n+5}}\f{(-1)^{(d-1)/2}+(-1)^{((8n+5)/d-1)/2}}2
\\=&\underset{d<\sqrt{8n+5}}{\sum_{d\mid 8n+5}}(-1)^{(d-1)/2}.
\end{align*}
This proves \eqref{t2'}.
\end{proof}

As usual, for $|q|<1$ we define
$$\psi(q):=\sum_{n=0}^\infty q^{T_n}.$$
By a known formula of Gau{\ss} (cf. (1.3.14) of \cite[p.~11]{B06}),
\begin{equation}\label{Gau}\psi(q)=\f{(q^2;q^2)_{\infty}}{(q;q^2)_{\infty}}.
\end{equation}

\begin{proof}[First proof of \eqref{pi2}]
Let $L$ and $R$ denote the left-hand side
and the right-hand side of \eqref{pi2}, respectively.
In view of Gau{\ss}' identity \eqref{Gau} and \eqref{t2'}, we have
$$R=\psi(q)\psi(q^4)=\sum_{n=0}^\infty t_2(n)q^n=\sum_{n=0}^\infty
\Bigg(\underset{d<\sqrt{8n+5}}{\sum_{d\mid 8n+5}}(-1)^{(d-1)/2}\Bigg)q^n.$$
On the other hand,
\begin{align*}L=&\sum_{k=0}^\infty(-1)^k\sum_{m=0}^\infty q^{k(k+3)/2+(2k+1)m}
\\=&\sum_{k=0}^\infty\sum_{m=0}^\infty(-1)^k q^{((2k+1)(2k+1+4(2m+1))-5)/8}
=\sum_{n=0}^\infty\underset{d<\sqrt{8n+5}}{\sum_{d\mid 8n+5}}(-1)^{(d-1)/2}q^n.
\end{align*}
Therefore \eqref{pi2} is valid.
\end{proof}

\begin{proof}[Second proof of \eqref{pi2}]
Recall the standard basic hypergeometric notation
$${}_r\phi_s\!\left[\begin{matrix} a_1,\dots,a_r\\
b_1,\dots,b_s\end{matrix}; q,z\right]
=\sum _{\ell=0} ^{\infty}\frac {\poq{a_1}{\ell}\cdots\poq{a_r}{\ell}}
{\poq{q}{\ell}\poq{b_1}{\ell}\cdots\poq{b_s}{\ell}}
\left((-1)^\ell q^{\binom\ell2}\right)^{s-r+1}z^\ell$$
(with $\poq{a_1}{\ell}\cdots\poq{a_r}{\ell}$ often
abbreviated as $(a_1,\ldots,a_r;q)_\ell$).
We start with the transformation formula
(cf.\ \cite[Eq.~(3.10.4)]{Gas04})
\begin{multline*}
{} _{10} \phi _{9} \! \left [             \begin{matrix} \let \over /
   \def\frac#1#2{#1 / #2} a, {\sqrt{a}} q, - {\sqrt{a}} q  , b,
   x, -x, y, -y, \\ \let \over / \def\frac#1#2{#1 / #2}
   {\sqrt{a}}, -{\sqrt{a}}, {{a q}\over b}, {{a q}\over x}, -{{a q}\over
   x}, {{a q}\over y}, -{{a q}\over y}, \end{matrix}\right.\\
\kern3cm
\left.
\begin{matrix} -{q^{-n}}, {q^{-n}}\\
- a {q^{n+1}} ,
   a {q^{n+1}}\end{matrix} ;q, {\displaystyle -{\frac {{a^3} {q^{2n+3}}}
   {b {x^2} {y^2}}}} \right ]
\\
=
{\frac{({\let \over / a^2 q^2, {{a^2 q^2}\over {x^2 y^2}}};
    q^2) _{n}} {({\let \over / {{a^2 q^2}\over {x^2}},
    {{a^2 q^2}\over {y^2}}}; q^2) _{n}}}\,
  {} _{5} \phi _{4} \! \left [ \begin{matrix} \let \over / {q^{-2 n}}, x^2, y^2,
    -{{a q}\over b}, -{{a q^2}\over b}\\ \let \over / {{x^2 y^2}\over
    {a^2 {q^{2 n}}}}, {{a^2 q^2}\over {b^2}}, - a q
    , - a q^2  \end{matrix} ;q^2, {\displaystyle q^2} \right
    ]  ,
\end{multline*}
where $n$ is a nonnegative integer. In this identity, set
$a=q$, $x=y=\sqrt q$, and let $n\to\infty$. In this way, we obtain
\begin{multline*}
{} _{4} \phi _{5} \! \left [             \begin{matrix} \let \over / \def\frac#1#2{#1
   / #2} q, b, {\sqrt{q}}, -{\sqrt{q}}\\ \let \over / \def\frac#1#2{#1 / #2}
   {{{q^2}}\over b}, {q^{{3\over 2}}}, -{q^{{3\over 2}}}, 0, 0\end{matrix} ;q,
   {\displaystyle {\frac {{q^4}} b}} \right ]
   \\=
  {\frac {
      ({\let \over / \def\frac#1#2{#1 / #2} {q^2}}; {q^2}) _{\infty} \,
      ({\let \over / \def\frac#1#2{#1 / #2} {q^4}}; {q^2}) _{\infty} }
    {{{({\let \over / \def\frac#1#2{#1 / #2} {q^3}}; {q^2}) _{\infty}
        }^2}}}
{} _{4} \phi _{3} \! \left [             \begin{matrix} \let \over /
       \def\frac#1#2{#1 / #2} q, -{{{q^2}}\over b}, -{{{q^3}}\over b}, q\\
       \let \over / \def\frac#1#2{#1 / #2} -{q^3}, -{q^2}, {{{q^4}}\over
       {{b^2}}}\end{matrix} ;{q^2}, {\displaystyle {q^2}} \right ] .
\end{multline*}
By performing the limit as $b\to0$, the above transformation formula is reduced
to
$$
{} _{3} \phi _{3} \! \left [             \begin{matrix} \let \over / \def\frac#1#2{#1
   / #2} q, {\sqrt{q}}, -{\sqrt{q}}\\ \let \over / \def\frac#1#2{#1 / #2}
   -{q^{{3\over 2}}}, {q^{{3\over 2}}}, 0\end{matrix} ;q, {\displaystyle {q^2}}
   \right ] = {\frac {
      ({\let \over / \def\frac#1#2{#1 / #2} {q^2}}; {q^2}) _{\infty} \,
      ({\let \over / \def\frac#1#2{#1 / #2} {q^4}}; {q^2}) _{\infty} }
    {{{({\let \over / \def\frac#1#2{#1 / #2} {q^3}}; {q^2}) _{\infty}
        }^2}}}\,
{} _{2} \phi _{2} \! \left [             \begin{matrix} \let \over /
       \def\frac#1#2{#1 / #2} q, q\\ \let \over / \def\frac#1#2{#1 / #2}
       -{q^2}, -{q^3}\end{matrix} ;{q^2}, {\displaystyle {q^3}} \right ] .
$$
If the left-hand side is written out explicitly, we see that it agrees
with the left-hand side in \eqref{pi2} up to a multiplicative
factor of $1-q$. On the other hand,
the $_2\phi_2$-series on the right-hand side can be evaluated by
means of the summation formula (cf.\ \cite[Ex.~1.19(i); Appendix
  (II.10)]{Gas04})
$$
{}_2\phi _2\!\left [ \begin{matrix} \let\over/ a,{q\over a}\\ \let\over/  -q,b\end{matrix} ;q,-b\right ] =
  { \frac {{(\let\over/ a b,{{b q}\over a};{q^2})}_{\infty}}  {{(\let\over/ b;q)}_{\infty}}} .
$$
Thus, we arrive at
$$
\sum_{k=0}^\infty\f{(-1)^kq^{k(k+3)/2}\,(1-q)}{1-q^{2k+1}}
 = {\frac {({\let \over / \def\frac#1#2{#1 / #2} {q^2}}; {q^2}) _{\infty}\,
({\let \over / \def\frac#1#2{#1 / #2} {q^4}}; {q^2}) _{\infty}
       \,{{({\let \over / \def\frac#1#2{#1 / #2} -{q^4}}; {q^4}) _{\infty}^2 }
        } }
      {({\let \over / \def\frac#1#2{#1 / #2} -{q^3}}; {q^2}) _{\infty} \,
      {{({\let \over / \def\frac#1#2{#1 / #2} {q^3}}; {q^2}) _{\infty} ^2}}}},
$$
which is indeed equivalent to \eqref{pi2}.
\end{proof}

\section{Proof of Theorem~\ref{th-main}}
\label{sec:3}

\begin{proof}[Proof of \eqref{q2}]
We construct a $q$-analogue of the WZ pair given by
Guillera \cite[Identity~1]{Gui08}.

Recall that a pair of bivariate functions $(F(n,k), G(n,k))$ is called a {\it WZ pair} \cite[Chapter~7]{PWZ96} if
\[
F(n+1,k) - F(n,k) = G(n,k+1)- G(n,k).
\]
It was shown (cf.\ \cite{Am97}) that
\begin{equation} \label{WZ-id}
\sum_{n=0}^\infty G(n,0) = \lim_{k \to \infty} \sum_{n=0}^\infty G(n,k) + \sum_{k=0}^\infty F(0,k).
\end{equation}

We make the following construction.
Let
\[
F_q(n,k) = 4 \cdot \frac {1-q^{2n}} {1-q} \cdot B_q(n,k)
\]
and
\[
G_q(n,k) = \frac{4(1 + q^{2n+1} -  2 q^{4n+2k+1})}{(1 - q)(1+q^{2n}) (1 + q^{2 n+1})} B_q(n,k),
\]
where
\[
B_q(n,k) = \frac{(q;q^2)_k^2 (q;q^2)_n^3 }{ (q^{2n+2};q^2)_k^2 (q^2;q^2)_n^3 (-1;q)_{2n}} q^{2n^2+4nk}.
\]
We can extend the definition of $B_q(n,k)$ from
nonnegative integers $n,k$ to any real numbers $n,k$ by defining
\[
(a;q)_n = \frac{(a;q)_\infty}{(aq^n;q)_\infty}.
\]

Let $a$ be a positive real number. It is straightforward to check that\break
$(F_q(n+a,k),G_q(n+a,k))$ is a WZ pair. Observing that $B_q(n,k)$
contains the factor $q^{4nk}$, we get
\[
\lim_{k \to \infty} \sum_{n=0}^\infty G_q(n+a,k) = 0.
\]
Thus,
\begin{equation} \label{q-WZ}
\sum_{n=0}^\infty G_q(n+a,0) = \sum_{n=0}^\infty F_q(a,k).
\end{equation}
Setting $a=1/2$ and noting that
\[
(q;q^2)_{n+1/2} = \frac{(q;q^2)_\infty}{(q \cdot q^{2n+1};q^2)_\infty}
= \frac{(q;q^2)_\infty (q^2;q^2)_n}{(q^2;q^2)_\infty}
\]
and
\[
(q^2;q^2)_{n+1/2} = \frac{(q^2;q^2)_\infty}{(q^2 \cdot q^{2n+1};q^2)_\infty}
= \frac{(q^2;q^2)_\infty (q;q^2)_{n+1}}{(q;q^2)_\infty}
\]
for any $n\in\N$, we infer that
\[
F_q \left( \frac{1}{2}, k \right) = \frac{2 q^{1/2} }{1-q}\cdot \frac{q^{2k}}{(1-q^{2k+1})^2} \cdot\frac{(q;q^2)_\infty^6}{(q^2;q^2)_\infty^6},
\]
and
\begin{align*}
G_q \left( n+\frac{1}{2}, 0 \right) = &\frac{4 q^{2n^2+2n+1/{2}} (1 + q^{2n+2} - 2q^{4n+3})}{(1-q)} \\
&\times \frac{ (q; q^2)_\infty^3 (q^{2n+3}; q^2)_\infty^3 }{(-1;q)_{2n+3} (q^{2n+2};q^2)_\infty^3 (q^2; q^2)_\infty^3}.
\end{align*}
After cancelling the common factors, we arrive at \eqref{q2}.
\end{proof}

\begin{remark}
Guillera \cite{Gui08} obtained the identity \eqref{o-id} via the WZ pair
\[
F(n,k) = 8n B(n,k), \quad G(n,k) = (6n+4k+1) B(n,k),
\]
where
\[
B(n,k) =  \frac{(2k)!^2 (2n)!^3}{2^{8n+4k}(n+k)!^2 k!^2 n!^4}.
\]
Since
\[
\lim_{q \to 1} F_q(n,k) = F(n,k)\quad\mbox{and}\quad \lim_{q \to 1} G_q(n,k) = G(n,k),
\]
the pair $(F_q(n,k), G_q(n,k))$ is indeed
a $q$-analogue of the pair $(F(n,k),\break G(n,k))$.
\end{remark}

\begin{proof}[Proof of \eqref{q-id}]
We start from the quadratic transformation formula (cf.\break \cite[Eq.~(3.8.13)]{Gas04})
\begin{multline} \sum_{n=0}^\infty \frac {(1-aq^{3n})\,(a,d,aq/d;q^2)_n\,
(b,c,aq/bc;q)_n}
{(1-a)\,(q,aq/d,d;q)_n\,(aq^2/b,aq^2/c,bcq;q^2)_n}q^n
\\=
\frac {(aq^2,bq,cq,aq^2/bc;q^2)_\infty}
{(q,aq^2/b,aq^2/c,bcq;q^2)_\infty}
   {}_3\phi _2\!\left [ \begin{matrix} b,c,aq/bc\\ dq,aq^2/d
\end{matrix} ;q^2,q^2\right ].
\label{CK1}
\end{multline}
We set $a=q^2$ and $b=c=q$. This yields
\begin{multline} \sum_{n=0}^\infty \frac {(1-q^{3n+2})\,(q^2,d,q^3/d;q^2)_n\,
(q,q,q;q)_n}
{(1-q^2)\,(q,q^3/d,d;q)_n\,(q^3,q^3,q^3;q^2)_n}q^n
\\=
\frac {(q^4,q^2,q^2,q^2;q^2)_\infty}
{(q,q^3,q^3,q^3;q^2)_\infty}
   {}_3\phi _2\!\left [ \begin{matrix} q,q,q\\ dq,q^4/d
\end{matrix} ;q^2,q^2\right ].
\label{CK2}
\end{multline}
At this point, we observe that the limit $d\to0$ applied to the
left-hand side of \eqref{CK2} produces exactly the left-hand side of
\eqref{q-id}.
Thus, it remains to show the limit $d\to0$ applied to the right-hand
side of \eqref{CK2} yields the right-hand side of \eqref{q-id}.

In order to see this, we take recourse to the transformation formula
(cf.\ \cite[Eq.~(3.3.1) or Appendix (III.34)]{Gas04})
\begin{multline*}
{} _{3} \phi _{2} \! \left [ \begin{matrix} \let \over / A, B, C\\ \let \over / D,
   E\end{matrix} ;q, {\displaystyle q} \right ] =
  {\frac {(\let \over / {{B C q}\over E}, {q\over E} ;q) _\infty}  {(\let
     \over / {{C q}\over E}, {{B q}\over E} ;q) _\infty} }
    {} _{3} \phi _{2} \! \left [ \begin{matrix} \let \over / {D\over A}, B, C\\ \let
     \over / D, {{B C q}\over E}\end{matrix} ;q, {\displaystyle
{\frac{A q} E}}
     \right ]\\
 - {\frac {(\let \over / {q\over E}, A, B, C, {{D q}\over E} ;q)
     _\infty}  {(\let \over / {{C q}\over E}, {{B q}\over E}, D, {E\over
     q}, {{A q}\over E} ;q) _\infty} }
    {} _{3} \phi _{2} \! \left [ \begin{matrix} \let \over / {{C q}\over E}, {{B
     q}\over E}, {{A q}\over E}\\ \let \over / {{D q}\over E}, {{{q^2}}\over
     E}\end{matrix} ;q, {\displaystyle q} \right ].
\end{multline*}
Here we replace $q$ by $q^2$ and set $A=B=C=q$, $D=dq$, and $E=q^4/d$,
to obtain
\begin{multline*}
   {}_3\phi _2\!\left [ \begin{matrix} q,q,q\\ dq,q^4/d
\end{matrix} ;q^2,q^2\right ]
=
\frac {(d,d/q^2;q^2)_\infty} {(d/q,d/q;q^2)_\infty}
   {}_2\phi _1\!\left [ \begin{matrix} q,q\\ dq
\end{matrix} ;q^2,\displaystyle\frac {d} {q}\right ]\\
-\frac {(d/q^2,q,q,q,d^2/q;q^2)_\infty} {(d/q,d/q,dq,q^2/d,d/q;q^2)_\infty}
   {}_3\phi _2\!\left [ \begin{matrix} d/q,d/q,d/q\\ d^2/q,d
\end{matrix} ;q^2,q^2\right ].
\end{multline*}
From here it is evident that
\begin{equation}
 \lim_{d\to0}  {}_3\phi _2\!\left [ \begin{matrix} q,q,q\\ dq,q^4/d
\end{matrix} ;q^2,q^2\right ]
=1.
\label{CK3}
\end{equation}
Indeed, the first term on the right-hand side of \eqref{CK3} converges
trivially to~$1$, while in the second term everywhere the substitution
$d=0$ is fine --- and thus produces a well-defined {\it finite} value
---,
except for the factor $(q^2/d,q^2)_\infty$ in the denominator.
However, as $d\to0$ (to be precise,
we take $d=q^2/D$ with $D\to-\infty$),
this term becomes unbounded,
whence the whole term tends to~0.
Thus, performing this limit, we get
\begin{multline*} \sum_{n=0}^\infty \frac {(1-q^{3n+2})\,(q^2;q^2)_n\,
(q,q,q;q)_n}
{(1-q^2)\,(q;q)_n\,(q^3,q^3,q^3;q^2)_n}q^{n+\binom n2}
\\=
\frac {(q^4,q^2,q^2,q^2;q^2)_\infty}
{(q,q^3,q^3,q^3;q^2)_\infty}
=
\frac {(1-q)^3\,(q^2,q^2,q^2,q^2;q^2)_\infty}
{(1-q^2)\,(q,q,q,q;q^2)_\infty},
\end{multline*}
as desired.
\end{proof}

\section{Alternative proofs of the identities of Guo and Liu}
\label{sec:4}

In this last section, we provide alternative proofs of
\eqref{eq:GL1} and \eqref{eq:GL2}, showing that they can be
obtained as special/limiting cases of a quadratic summation formula
due to Gasper and Rahman \cite{GaRaAB}.

\begin{proof}[Proof of \eqref{eq:GL1}]
We start with the quadratic summation formula
(cf.\ \cite[Eq.~(3.8.12)]{Gas04})
\begin{multline*} 
   \sum _{k=0} ^{\infty}
\frac { 1 - a {q^{3 k}}} {1-a}\cdot
 {{  \frac
    {({\let \over / \def\frac#1#2{#1 / #2} a, b, {q\over b}}; q) _{k}}
{({\let \over /
    \def\frac#1#2{#1 / #2} {q^2}, {{a {q^2}}\over b}, a b q}; {q^2}) _{k}}\cdot
\frac {({\let \over / \def\frac#1#2{#1 / #2}
    d, f, {{{a^2} q}\over {d f}}}; {q^2}) _{k}}  {({\let
    \over / \def\frac#1#2{#1 / #2} {{a q}\over d}, {{a q}\over f},
    {{d f}\over a}}; q) _{k}}
    }}\cdot{q^k}\\
+    \frac {(\let \over / \def\frac#1#2{#1 / #2} a q, {f\over a}, b, {f\over
     q} ;q) _\infty} {(\let \over / \def\frac#1#2{#1 / #2} {a\over f},
     {{f q}\over a}, {{a q}\over d}, {{d f}\over a} ;q) _\infty} \cdot
    \frac {(\let \over / \def\frac#1#2{#1 / #2} d, {{{a^2} q}\over {b d}},
     {{{a^2} q}\over {d f}}, {{f {q^2}}\over d}, {{d {f^2} q}\over
     {{a^2}}} ;{q^2}) _\infty} {(\let \over / \def\frac#1#2{#1 / #2}
     {{a {q^2}}\over b}, a b q, {{f q}\over {a b}}, {{b f}\over a},
     {{a {q^2}}\over {b f}} ;{q^2}) _\infty}  \\
\times
{} _{3} \phi _{2} \! \left [             \begin{matrix} \let \over / \def\frac#1#2{#1
     / #2} f, {{b f}\over a}, {{f q}\over {a b}}\\ \let \over /
     \def\frac#1#2{#1 / #2} {{f {q^2}}\over d}, {{d {f^2} q}\over
     {{a^2}}}\end{matrix} ;{q^2}, {\displaystyle {q^2}} \right ]
\\
 = \frac {(\let \over / \def\frac#1#2{#1 / #2} a q,
    {f\over a} ;q) _\infty} {(\let \over / \def\frac#1#2{#1 / #2} {{a q}\over
    d}, {{d f}\over a} ;q) _\infty} \cdot
   \frac {(\let \over / \def\frac#1#2{#1 / #2} {{a {q^2}}\over {b d}},
    {{a b q}\over d}, {{b d f}\over a}, {{d f q}\over {a b}} ;{q^2})
    _\infty} {(\let \over / \def\frac#1#2{#1 / #2} {{a {q^2}}\over b},
    a b q, {{b f}\over a}, {{f q}\over {a b}} ;{q^2}) _\infty}.
\end{multline*}
Now replace $f$ by $a^2q^{2N+1}/d$, with $N$ a positive integer.
The effect is that, because of the factor $(a^2q/df;q^2)_\infty$,
this kills off the second term on the left-hand side. In other
words, now this is indeed a genuine summation formula.
Now replace $q$ by $q^2$ and choose $a=b=q$. Then the above identity
reduces to
\begin{multline}\label{eq:red}
\sum_{k=0}^\infty
   \frac {1-q^{6 k+1}} {1-q}
   \cdot\frac{(d,q^{4
   N+4}/{d},q^{-4 N};
   q^4)_k}{(q^4,
   q^4,q^4;
   q^4)_k} \cdot\frac{(q,
   q,q;
   q^2)_k}{(q^3/{d},
   d q^{-4 N-1},q^{4 N+3};
   q^2)_k}\cdot q^{2 k}\\
=\frac{(q^3,
   q^{4 N+3}/{d};
   q^2)_{\infty}}{({q^3}/
   {d},q^{4 N+3};
   q^2)_{\infty}}
   \cdot\frac{({q^4}/{d},
   {q^4}/{d},q^{4
   N+4},q^{4 N+4};
   q^4)_{\infty}}{(q^4,
   q^4,{q^{4
   N+4}}/{d},{q^{4
   N+4}}/{d};
   q^4)_{\infty}}.
\end{multline}
Finally, we set $d=q^2$ and let $N\to\infty$.
Upon little simplification, we arrive at \eqref{eq:GL1}.
\end{proof}

\begin{proof}[Proof of \eqref{eq:GL2}]
We proceed in a similar manner. In \eqref{eq:red}, we set
$d=q^{-2N}$ with $N$ a positive integer. This yields the identity
\begin{multline*}
\sum_{k=0}^\infty
   \frac {1-q^{6 k+1}} {1-q}
   \cdot\frac{(q^{-2N},q^{6
   N+4},q^{-4 N};
   q^4)_k}{(q^4,
   q^4,q^4;
   q^4)_k}\cdot \frac{(q,
   q,q;
   q^2)_k}{({q^{2N+3}},
   q^{-6 N-1},q^{4 N+3};
   q^2)_k}\cdot q^{2 k}\\
=
\frac{(q^3;
   q^2)_N (q^{2N+4};
   q^4)_N^2}{(q^4
   ; q^4)_N^2
   (q^{4 N+3};
   q^2)_N}.
\end{multline*}
Letting $N\to\infty$, we then obtain \eqref{eq:GL2}.
\end{proof}

\medskip
{\bf Acknowledgment}. The authors would like to thank the anonymous referee for helpful comments.

\medskip

\end{document}